\newtheorem{Theorem} {Theorem} [section]
\newtheorem{Lemma} [Theorem] {Lemma}
\newcommand{\cQ}{{\mathcal Q}}
\newcommand{\PG}{\mathrm{PG}}
\newcommand{\<}{\langle}
\renewcommand{\>}{\rangle} 
\renewcommand{\phi}{\varphi} 
\title{Bijection between point-hyperplane antiflags of $V(n, 2)$ and nonsingular points of $O^+(2n, 2)$}
\author{\renewcommand\thefootnote{\alph{footnote}}
Ferdinand Ihringer\footnotemark[1] \and
\renewcommand\thefootnote{\alph{footnote}}
Antonio Pasini\footnotemark[2]
}
\date{11 April 2026}
\begin{document}
\maketitle

{\renewcommand\thefootnote{\alph{footnote}}
\footnotetext[1]{Dept.~of Mathematics,
Southern University of Science and Technology, Shenzhen, Guangdong, China.
E-mail: {\tt ihringer@sustech.edu.cn}}

\footnotetext[2]{Dep.~Information Engineering and Mathematics,
University of Siena, Via Roma 56, I-53100 Siena, Italy.
E-mail: {\tt antonio.pasini@unisi.it}}

\begin{abstract}
  We give a bijection between the point-hyperplane antiflags
  of $V(n, 2)$ and the nonsingular points of $V(2n, \allowbreak 2)$ with respect to a hyperbolic quadric.
  With the help of this bijection, we give a description of the strongly regular graph
  $NO^+_{2n}(2)$ in $V(2n, 2)$. We also describe a graph with respect to a hyperbolic quadric in $V(2n, 2)$
  that was recently defined by Stanley and Takeda in $V(n, 2)$.
  Similarly, we give a bijection between the point-hyperplane antiflags
  of $V(n, 3)$ and the nonsingular points of one type in $V(2n, 3)$ with respect to a hyperbolic quadric.
\end{abstract}

\section{Introduction}

Having different models for the same mathematical object is important.
The most famous example in geometry is the Klein correspondence,
which describes an isomorphism between the lines in the projective
space of dimension three and the singular points of the Klein quadric \cite{Klein}.

The first correspondence, which we describe, is between point-hyperplane
antiflags in $\PG(n-1, 2)$, the projective space of dimension $n-1$
over the field with two elements, and the nonsingular points of $\PG(2n-1, 2)$
with respect to a hyperbolic quadric.
This does not only explain why the number of antiflags and the number of
nonsingular points both equal $(2^n-1) \cdot 2^{n-1}$, but
it also gives a new description for two graphs:
Firstly, it allows us to define the well-known strongly regular graph $NO^+_{2n}(2)$
in the vector space $V(n, 2)$ \cite[p. 85]{BvM}.
Secondly,
we can describe a graph on point-hyperplane
antiflags of $\PG(n-1, 2)$ on the vertex set of $NO^+_{2n}(2)$.
The latter graph has been recently investigated by Stanley and Takeda \cite{StanleyTakeda}.

The second correspondence, which we describe, is between point-hyper\-plane antiflags in $\PG(n-1, 3)$
and the nonsingular points of $\PG(2n-1, 3)$ with respect to a hyperbolic quadric of one type.
Both numbers equal $\frac12 (3^n-1) \cdot 3^{n-1}$. As a consequence, we find a description
of the strongly regular graph $NO^+_{2n}(3)$ in $\PG(n-1, 3)$.

\section{Preliminaries}

We work in $V(n, q)$, the vector space of dimension $n$
over the (finite) field with $q$ elements. 
We use vector space notation and algebraic dimensions,
while using some geometric wording: call $1$-spaces
\textit{points}, $2$-spaces \textit{lines}, $3$-spaces \textit{planes},
and $(n-1)$-spaces \textit{hyperplanes}. We say that two subspaces
are disjoint if their intersection is the trivial subspace.
A \textit{point-hyperplane antiflag} is a pair $(P, H)$,
where $P$ is a point and $H$ is a hyperplane with $P \nsubseteq H$.

Two distinct point-hyperplane antiflags $(P, H)$ and $(P', H')$ of $V(n, 2)$
are in one of the following relations:
\begin{itemize}
 \item Relation $A_1$: $P \subseteq H'$ and $P' \nsubseteq H$ or $P \nsubseteq H'$ and $P' \subseteq H$.
 Degree: $(2^{n-1}-1) \cdot 2^{n-1}$.
 \item Relation $A_2$: $P \subseteq H'$ and $P' \subseteq H$. Degree: $(2^{n-1}-1) \cdot 2^{n-2}$.
 \item Relation $A_3$: Either $P = P'$ and $H \neq H'$ or $P \neq P'$ and $H = H'$.
 Degree: $2(2^{n-1}-1)$.
 \item Relation $A_4$: $P \nsubseteq H'$, $P' \nsubseteq H$, $P \neq P'$, and $H \neq H'$.
 Degree: $(2^{n-1}-1)(2^{n-2}-1)$.
\end{itemize}
The graph with the anti-flags as set of vertices, two adjacent if in relation $A_2$,
has been recently investigated Stanley and Takeda \cite[\S4]{StanleyTakeda}.

For $V(n, q)$, $q \geq 3$, Relation $A_4$ splits into two and we replace it by
the following two relations $A_4'$ and $A_4^*$:
\begin{itemize}
 \item Relation $A_4'$: $P \nsubseteq H'$, $P' \nsubseteq H$, $P \neq P'$, $H \neq H'$, and $P+P'$ meets $H \cap H'$.
 \item Relation $A_4^*$: $P \nsubseteq H'$, $P' \nsubseteq H$, $P \neq P'$, $H \neq H'$, and $P+P'$ does not meet $H \cap H'$.
\end{itemize}

Equip a vector space $V(m, q)$ with a quadratic form $Q(x) = x^T A x$.
Say that $Q$ is \textit{nondegenerate} if $A$ has full rank.
Call a point $\< x \>$ \textit{singular} if $Q(x) = 0$.
Call a subspace $S$ \textit{totally singular} if all its points are singular.
The largest dimension of a totally singular subspace is called the \textit{rank} of $Q$.
There are the following nondegenerate quadratic forms up to isomorphy (e.g. see \cite{Taylor91}):
\begin{itemize}
 \item For $m$ even, $Q(x) = x_1 x_2 + x_3 x_4 + \cdots + x_{m-1} x_{m}$
        with its associated geometry of (totally) singular subspaces
        denoted by $O^+(m, q)$, a \textit{hyperbolic quadric}.
        The rank is $m/2$.
 \item For $m$ even, $Q(x) = \alpha x_1^2 + \beta x_1 x_2 + x_2^2 + x_3 x_4 + \cdots + x_{m-1} x_{m}$,
        where $\alpha x_1^2 + \beta x_1x_2 + x_2^2$ is irreducible,
        with its associated geometry of (totally) singular subspaces
        denoted by $O^-(m, q)$, an \textit{elliptic quadric}.
        The rank is $m/2-1$.
 \item For $m$ odd, $Q(x) = x_1^2 + x_2 x_3 + \cdots + x_{m-1} x_m$
        with its associated geometry of (totally) singular subspaces
        denoted by $O(m, q)$, a \textit{parabolic quadric}.
        The rank is $(m-1)/2$.
\end{itemize}

Now suppose that $m=2n$, $q=2$, and $Q$ is hyperbolic, so we work
with $O^+(2n, 2)$. Let $\cQ$ denote the set of singular points of $O^+(2n, 2)$.
We will use that in $\cQ$ a totally isotropic subspace of dimension $n-1$ lies in precisely two maximal totally isotropic subspaces.
Let $\perp$ denote the orthogonality relation of the bilinear form associated with the quadratic form $Q$ of $O^+(2n, 2)$.
For $q=2$, we only find two nontrivial relations
between nonsingular points $X := \< x \>$ and $X' := \< x' \>$:
\begin{itemize}
 \item Relation $B_1$: $\< x+x'\>$ is nonsingular, that is,
 the line $X+X'$ does not meet $\cQ$. Degree: $(2^{n-1}-1) \cdot 2^{n-1}$.
 \item Relation $B_2$: $\< x+x'\>$ is singular, that is, the line $X+X'$
 is tangent to $\cQ$. Degree: $(2^{n-1}+1)(2^{n-1}-1)$.
\end{itemize}
A \textit{strongly regular graph} with parameters $(v, k, \lambda, \mu)$
is a graph (not complete, not edgeless) with $v$ vertices, each vertex has degree $k$, two adjacent vertices
have precisely $\lambda$ common neighbors, and two non-adjacent vertices have
precisely $\mu$ common neighbors.
Both relations $B_1$ and $B_2$ define well-known strongly regular graphs, see \cite[\S3.1.2]{BvM}.
The graph with relation $B_2$ is usually denoted by $NO^+_{2n}(2)$.
Its complement, the graph with relation $B_1$, has parameters
\begin{align*}
  & v= (2^n-1) \cdot 2^{n-1}, && k = (2^{n-1}-1) \cdot 2^{n-1},\\
  & \lambda = (2^{n-2}-1) \cdot 2^{n-1}, && \mu = (2^{n-1}-1) \cdot 2^{n-2}.
\end{align*}

Changing $q=2$ to $q=3$, then there are two types of nonsingular points $\< x \>$:
those with $Q(x) = 1$ and those with $Q(x) = -1$. For two nonsingular points $X := \< x \>$
and $X' := \< x' \>$ with $Q(x) = Q(x') = 1$, we only find two nontrivial solutions:
\begin{itemize}
 \item Relation $B_1^*$: $Q(x+x') = -1$, that is,
 the line $X+X'$ does not meet $\cQ$.
 \item Relation $B_2^*$: $Q(x+x') \neq -1$, that is, the line $X+X'$
 is tangent to $\cQ$.
\end{itemize}
Again, both relations $B_1^*$ and $B_2^*$ define well-known strongly regular graphs, see \cite[\S3.1.3]{BvM}.
The graph with relation $B_1^*$ is usually denoted by $NO^+_{2n}(3)$
and has parameters
\begin{align*}
 & v = \frac12 (3^m-1) \cdot 3^{m-1}, && k = \frac12 (3^{m-1}-1) \cdot 3^{m-1},\\
 & \lambda = \frac12 (3^{m-1}+1) \cdot 3^{m-2}, && \mu = \frac12 (3^{m-2}+1) \cdot 3^{m-1}.
\end{align*}
For $q=3$, we will make use of the well-known fact that if $X+X'$ does not meet
$\cQ$, then (for $X, X'$ of the same type) $X \perp X'$.


\section{The First Correspondence}

As before, $\cQ$ denotes the singular points of $O^+(2n, 2)$
(naturally embedded in $V(2n, 2)$).
Within $\cQ$, fix two disjoint maximal singular subspaces
$\Pi$ and $\Sigma$. Our goal is to describe a bijection $f$
between the nonsingular points in $V(2n, 2)$ and
the point-hyperplane antiflags in $V(n, 2)$.
%

Every point $P$ of $V(2n, 2)$ exterior to $\Pi$ and $\Sigma$
defines a unique point $P_\Pi$ on $\Pi$ by
$P_\Pi = (P+\Sigma) \cap \Pi$, and
a unique point $P_{\Sigma}$ on $\Sigma$ by $P_{\Sigma} = (P+\Pi) \cap \Sigma$.
Note that $P$, $P_\Pi$, and $P_\Sigma$ are collinear.
Indeed, by definition, the line joining $P$ with $P_\Pi$ is contained in $P+\Sigma$, hence it meets $\Sigma$
in a point which, by definition of $P_\Sigma$, is necessarily equal to $P_\Sigma$.
Each point $X$ of $\Sigma$ defines a hyperplane $H(X)$ of $\Pi$ by $H(X) = X^\perp \cap \Pi$.
Note that
 \begin{align*}
    H(P_\Sigma) &= H((P+\Pi) \cap \Sigma) = ((P+\Pi) \cap \Sigma)^\perp \cap \Pi\\
    &= ((P^\perp \cap \Pi) + \Sigma) \cap \Pi = P^\perp \cap \Pi.
 \end{align*}
Define a function $f$ from the points of $V(2n, 2)$ exterior to $\Pi$ and $\Sigma$ to
point-hyperplane pairs of $\Pi$ by $f(P) = (P_\Pi, H(P_\Sigma))$.

The following lemmata are well-known in more generality,
for instance, see Proposition 5.2 in \cite{VMV19}.
We include proofs to make this note self-contained.

\begin{Lemma}\label{lem:uniqueln}
 If $P$ is a point exterior to $\Pi$ and $\Sigma$,
 then $P$ belongs to a unique line $L = \{ P, P_\Pi, P_\Sigma \}$.
 In particular, if $P$ is in $\cQ$, then $L$ is totally singular.
\end{Lemma}
\begin{proof}
Any point $P$ exterior to $\Pi$ and $\Sigma$ lies in a unique line $L$ crossing both $\Pi$ and $\Sigma$
as $V(2n, q)$ is the direct sum of $\Pi$ and $\Sigma$.
By definition, $L$ contains the points $P$, $P_\Pi$, and $P_\Sigma$.
\end{proof}

\begin{Lemma}\label{lem:antiflag}
 For a nonsingular point $P$, the pair $f(P) = (P_\Pi, H(P_\Sigma))$ is an antiflag of $\Sigma$.
\end{Lemma}
\begin{proof}
 We have $H(P_\Sigma) = P^\perp \cap \Pi$. By Lemma \ref{lem:uniqueln},
 $P_\Pi \subseteq P^\perp$ if and only if $P$ is a singular point.
\end{proof}

\begin{Lemma}
 The function $f$ is bijective.
\end{Lemma}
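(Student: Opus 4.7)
My plan is to produce an explicit inverse to $f$. The key structural fact is that $\Pi$ and $\Sigma$ are disjoint maximal totally singular subspaces, so $\Pi = \Pi^\perp$, $\Sigma = \Sigma^\perp$, and the alternating form $B$ associated to $Q$ induces a non-degenerate pairing between $\Pi$ and $\Sigma$. Under this pairing, each $k$-subspace of one corresponds to a unique $(n-k)$-subspace of the other.

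Given an antiflag $(P, H)$ of $\Sigma$, I would define two auxiliary objects inside $\Pi$: the $(n-1)$-space $G \coloneq P^\perp \cap \Pi$ dual to $P$, and the point $P' \coloneq H^\perp \cap \Pi$ dual to $H$. Since $\Pi \cap \Sigma = 0$, the sum $L \coloneq P + P'$ is a genuine line of $V(2n, 2)$. Fixing generators $p \in P$ and $p' \in P'$, my candidate preimage of $(P, H)$ is the third point $X \coloneq \<p + p'\>$ of $L$.

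The main obstacle is to show that $X$ is non-singular; this is where the antiflag hypothesis enters. Since $Q(p) = Q(p') = 0$, we have $Q(p + p') = B(p, p')$. Suppose toward a contradiction that $B(p, p') = 0$, i.e.\ $p' \in P^\perp$. By definition $p' \in H^\perp$, so $p' \in P^\perp \cap H^\perp = (P + H)^\perp$. The antiflag condition $P \nsubseteq H$ forces $P + H = \Sigma$ (since $H$ is a hyperplane of $\Sigma$), so $p' \in \Sigma^\perp = \Sigma$; combined with $p' \in \Pi$ and $\Pi \cap \Sigma = 0$ this gives $p' = 0$, a contradiction.

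To close the argument, I would verify $f(X) = (P, H)$ by direct computation. For $y \in \Sigma$, total singularity of $\Sigma$ yields $B(y, p) = 0$, so $y \in X^\perp$ iff $B(y, p') = 0$; hence $X^\perp \cap \Sigma = (P')^\perp \cap \Sigma = H$. Symmetrically $X^\perp \cap \Pi = P^\perp \cap \Pi = G$, so $G^\perp \cap \Sigma = P$, as desired. This shows that $(P, H) \mapsto X$ is a right inverse for $f$; combined with the fact that both the set of non-singular points and the set of antiflags have cardinality $(2^n - 1) \cdot 2^{n-1}$, $f$ is a bijection.
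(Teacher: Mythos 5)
Your proof is correct. It is the mirror image of the paper's argument: both proofs invoke the count $(2^n-1)\cdot 2^{n-1}$ of each side and then establish one-sided behaviour of $f$, and both hinge on the same line, namely the one joining $P$ to $H^\perp\cap\Pi$ (which the paper identifies as $(G+H)^\perp$ and observes contains $X$ as its third point). The paper runs this from $X$ to $(P,H)$: since $(P,H)$ determines $(G,H)$, hence the line, hence its unique non-singular point, $f$ is injective, and counting finishes. You run it from $(P,H)$ to $X$: you build the third point of $P+(H^\perp\cap\Pi)$ explicitly, check it is non-singular, and verify $f(X)=(P,H)$, so $f$ is surjective, and counting finishes. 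Your direction costs two extra verifications the paper does not need --- the non-singularity of $\<p+p'\>$ (which is where you correctly use the antiflag hypothesis $P+H=\Sigma$ via $Q(p+p')=B(p,p')$) and the computation of $X^\perp\cap\Pi$ and $X^\perp\cap\Sigma$ --- but it buys an explicit closed formula for $f^{-1}$, which the paper never writes down. Had you also noted that your section is injective (immediate, since $X$ determines $L$ and hence $\{P,P'\}$), you could have dispensed with the counting argument altogether.
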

\begin{proof}
 The number of nonsingular points in $V(2n,2)$
 and the number of point-hyperplane antiflags in $V(n, 2)$ are both equal to
 $(2^n-1) \cdot 2^{n-1}$.
 We still need to show that $f$ is injective. Indeed, by Lemma \ref{lem:uniqueln},
 $P_\Pi$ and $P_\Sigma$ (and, therefore, $P_\Pi$ and $H(P_\Sigma)$) uniquely determine $P$.
\end{proof}

Define the following non-trivial relations on the vertices of $NO^+_{2n}(2)$,
that is, the set of nonsingular points of $V(2n, 2)$ with respect to $O^+(2n, 2)$:
\begin{itemize}
 \item Relation $C_1$: $P+P'$ is disjoint from $\cQ$. Degree: $(2^{n-1}-1) \cdot 2^{n-1}$.
 \item Relation $C_2$: $P+P'$ is a tangent and meets $\cQ$ in a point $Y$ exterior to $\Pi$ and $\Sigma$,
 where the unique line through $Y$ crossing both $\Pi$ and $\Sigma$ (see Lemma \ref{lem:uniqueln}) is not contained in $(P+P')^\perp$.
 Degree: $(2^{n-1}-1) \cdot 2^{n-2}$.
 \item Relation $C_3$: $P+P'$ is a tangent and meets $\Pi$ or $\Sigma$. Degree: $2(2^{n-1}-1)$.
 \item Relation $C_4$: $P+P'$ is a tangent and meets $\cQ$ in a point $Y$ exterior to $\Pi$ and $\Sigma$,
 where the unique line through $Y$ crossing both $\Pi$ and $\Sigma$ (see Lemma \ref{lem:uniqueln}) is contained in $(P+P')^\perp$.
 Degree: $(2^{n-1}-1)(2^{n-2}-1)$.
\end{itemize}
   
While these numbers can be counted directly, we will give an isomorphism
between $A_i$ and $C_i$.

\begin{Theorem}\label{thm:corr1}
 Let $i \in \{1,2,3,4\}$.
 Two vertices $P, P'$ of $NO^+_{2n}(2)$ are in relation $C_i$
 if and only if $f(P)$ and $f(P')$ are in relation $A_i$.
\end{Theorem}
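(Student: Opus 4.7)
For each nonsingular $X$, the line $L_X:=(G+H)^\perp$ exhibited in the proof of injectivity contains $X$, meets $\Sigma$ in $P$, and meets $\Pi$ in a third point $R:=H^\perp\cap\Pi$. Since $L_X$ contains two singular points and the nonsingular point $X$, it is a hyperbolic line, so I can fix generators $p\in P$, $r\in R$ with $B(p,r)=1$ and take $x=p+r$. Because $\Pi$ and $\Sigma$ are disjoint maximal totally singular subspaces, $B$ restricts to a perfect pairing $\Pi\times\Sigma\to\Ff_2$, and one checks $G=P^\perp\cap\Pi$ and $H=R^\perp\cap\Sigma$ using $B(v,r)=0$ for $v\in\Pi$ and $B(v,p)=0$ for $v\in\Sigma$. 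Doing the same for $X'$ with $p',r',x'=p'+r'$, the identities $B(p,p')=B(r,r')=0$ and $B(p,r)=B(p',r')=1$ give
\[
B(x,x')=B(p,r')+B(p',r),\qquad B(p+p',\,r+r')=B(p,r')+B(p',r).
\]

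\textbf{Translating the $A_i$ conditions.} From $B(p,p')=0$ one has $P\subseteq H'$ iff $P\perp X'$ iff $B(p,r')=0$, and symmetrically $P'\subseteq H$ iff $B(p',r)=0$. Also $Y:=\langle x+x'\rangle$ satisfies $Y\in\Sigma$ iff $r+r'=0$ iff $R=R'$ iff $H=H'$ (by duality between $\Pi$ and $\Sigma$), and $Y\in\Pi$ iff $P=P'$. Combining these, $C_1\Leftrightarrow A_1$ is immediate because the line $X+X'$ misses $\cQ$ iff $B(x,x')=1$ iff exactly one of $B(p,r'),B(p',r)$ vanishes; and $C_3\Leftrightarrow A_3$ is immediate because $Y\in\Pi\cup\Sigma$ iff $P=P'$ or $H=H'$, the case $X=X'$ being excluded since $f$ is a bijection.

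\textbf{Separating $A_2$ from $A_4$.} In the remaining regime $X\perp X'$ with $Y\notin\Pi\cup\Sigma$, neither $p+p'$ nor $r+r'$ vanishes, so Lemma \ref{lem:uniqueln} identifies the unique line $\ell$ through $Y$ crossing both $\Pi$ and $\Sigma$ with $\langle p+p',\,r+r'\rangle$: this line meets $\Sigma$ in $\langle p+p'\rangle$, $\Pi$ in $\langle r+r'\rangle$, and passes through $y=(p+p')+(r+r')$. The condition $\ell\subseteq(X+X')^\perp=X^\perp\cap(X')^\perp$ amounts to the four equations
\[
B(x,p+p')=B(x,r+r')=B(x',p+p')=B(x',r+r')=0,
\]
and a direct expansion using $B(p,r)=B(p',r')=1$ and $B(p,p')=B(r,r')=0$ shows each one reduces to either $B(p',r)=1$ or $B(p,r')=1$, so all four hold iff $B(p,r')=B(p',r)=1$. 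Hence $C_4$ corresponds exactly to $P\nsubseteq H'$ and $P'\nsubseteq H$ with distinct antiflags, i.e.\ $A_4$, while the complementary case $B(p,r')=B(p',r)=0$ in the same tangent regime yields $C_2\Leftrightarrow A_2$. The only real obstacle is the bookkeeping at the start: one must choose $p,r,p',r'$ coherently so that $B(p,r)=B(p',r')=1$, and one must invoke Lemma \ref{lem:uniqueln} explicitly both to identify $\ell$ and to ensure it is well-defined throughout the $C_2/C_4$ regime; once this is set up, the rest is short $\Ff_2$-arithmetic.
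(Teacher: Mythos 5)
Your proof is correct, but it takes a genuinely different route from the paper's. You linearize the whole problem: writing $V(2n,2)=\Pi\oplus\Sigma$ and using the fact that the polar form $B$ restricts to a perfect pairing $\Pi\times\Sigma\to\Ff_2$, you encode each nonsingular point as $x=p+r$ with $\<p\>=P$ and $\<r\>=H^\perp\cap\Pi$ (the other two points of the secant $(G+H)^\perp$ from the injectivity proof), after which all eight conditions in $A_1,\dots,A_4$ and $C_1,\dots,C_4$ become statements about the two bits $B(p,r')$ and $B(p',r)$. The paper instead argues synthetically throughout: it introduces $K=X^\perp\cap X'^\perp\cap\Pi$, $L=H\cap H'$, $M=K^\perp\cap\Sigma$, and separates the cases via the isomorphism type of $(X+X')^\perp$, a pigeonhole argument on the three-point line $M$ together with the third nonsingular point $X''$ of $X+X'$ for $i=1$, and a contradiction with a maximal totally singular subspace $Z+K+Y$ for $i=2$. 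Your computation is shorter and makes the $A_2$/$A_4$ dichotomy completely transparent (both bits $0$ versus both bits $1$, the parity being forced by $B(x,x')=0$ in the tangent regime), at the price of being tied to the hyperbolic splitting; the paper's argument is longer but stays at the level of incidence geometry. Two small remarks: over $\Ff_2$ there is no choice of generators to coordinate --- $p$ and $r$ are the unique nonzero vectors of their points, and $B(p,r)=Q(p+r)=Q(x)=1$ is automatic because $X$ is nonsingular --- so the ``bookkeeping obstacle'' you flag at the end dissolves; and you only need the uniqueness half of Lemma~\ref{lem:uniqueln}, since you exhibit the line $\<p+p',\,r+r'\>$ explicitly.
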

\begin{proof}
Let $P$ and $P'$ be distinct nonsingular points.
Let $R$ denote the third point on the line $P+P'$.
In particular, $R$ is in $P^\perp \cap P'^\perp$.
Thus, if $R$ is in $\Pi$, then $P_\Pi = P'_\Pi$
(as $H(P_\Sigma) = P^\perp \cap \Pi$).
Similarly, if $R$ is in $\Sigma$, then $P_\Sigma = P'_\Sigma$.
This corresponds to Case $A_3$ and Case $C_3$.
Now assume that $R$ is exterior to $\Pi$ and $\Sigma$.
Put $L_\Pi = \{ P_{\Pi}, P'_{\Pi}, R_{\Pi}\}$
and $L_{\Sigma} = \{ P_\Sigma, P'_\Sigma, R_\Sigma \}$.
Note that $L_\Pi$ and $L_\Sigma$ are lines as they are the projection
of the line $L = \{ P, P', R \}$ onto $\Pi$, respectively, $\Sigma$.

\smallskip

If $R$ is not in $\cQ$, then $L_\Pi^\perp$ is disjoint from $L_\Sigma$. Thus, either
$P_\Pi \perp P'_\Sigma$, $R_\Pi \perp P_\Sigma$, and $P'_\Pi \perp R_\Sigma$;
or $P_\Pi \perp R_\Sigma$, $P'_\Pi \perp P_\Sigma$, and $R_\Pi \perp P'_\Sigma$.
This corresponds to Case $A_1$ and Case $C_1$.

\smallskip

Suppose that $R$ is in $\cQ$ and that $L_\Pi^\perp$ is disjoint from $L_\Sigma$. Then, similarly to the previous case, $P_\Pi \perp P'_\Sigma$
and $P_\Sigma \perp P'_\Pi$. This corresponds to Case $A_2$ and $C_2$.

\smallskip
Now suppose that $R$ is in $\cQ$, but that $L_\Pi^\perp$ meets $L_\Sigma$.
Then $L_\Sigma \subseteq X^\perp$ for one $X \in \{ P_\Pi, P'_\Pi, R_\Pi \}$.
But $L_\Sigma \nsubseteq P_\Pi^\perp, P_\Pi'^\perp$ by Lemma \ref{lem:uniqueln},
so $X = R_\Pi$. Similarly, $L_\Pi \subseteq R_\Sigma^\perp$.
Thus, this corresponds to Case $A_4$ and Case $C_4$.
\end{proof}

\section{The Second Correspondence}

For our second correspondence, the setup and argument are almost identical. Therefore, we limit ourselves
to highlight the differences in the argument.

For the non-trivial relations on the vertices of $NO^+_{2n}(3)$,
the defintion is identical to the previous correspondence except for $C_1$
which splits into two relations $C_1'$ and $C_1^*$:
\begin{itemize}
 \item Relation $C_1'$: $P+P'$ is disjoint from $\cQ$, $P_\Sigma \subseteq P_\Pi'^\perp$, and $P_\Sigma' \subseteq P_\Pi^\perp$.
 \item Relation $C_1^*$: $P+P'$ is disjoint from $\cQ$, $P_\Sigma \nsubseteq P_\Pi'^\perp$, and $P_\Sigma' \nsubseteq P_\Pi^\perp$.
\end{itemize}

\begin{Theorem}
 Two vertices $P, P'$ of $NO^+_{2n}(3)$ are in relation $N$
 if and only if $f(P)$ and $f(P')$ are in relation $M$ for the following pairs of $M$ and $N$:
 \begin{enumerate}
  \item $M = A_1$ and $N = C_2$,
  \item $M = A_2$ and $N = C_1'$,
  \item $M = A_3$ and $N = C_3$,
  \item $M = A_4'$ and $N = C_4$,
  \item $M = A_4^*$ and $N = C_1^*$.
 \end{enumerate}
\end{Theorem}
\begin{proof}
 Let $\{P, P', R, S \}$ be the points of the line $P+P'$.
 Recall that as $P$ and $P'$ have the same type, so the line through them
 cannot be a secant of $\cQ$.
 If either $R$ or $S$ are in $\Pi$ or $\Sigma$, then we are in
 Case $A_3$ and Case $C_3$.
 Thus, now we assume that neither of them is in $\Pi$ or $\Sigma$.
 Put $L_\Pi = \{ P_\Pi, P'_\Pi, R_\Pi, S_\Pi \}$
 and $L_\Sigma = \{ P_\Sigma, P'_\Sigma, R_\Sigma, S_\Sigma \}$.
 The proof is identical to Theorem \ref{thm:corr1},
 except for when $L_\Pi^\perp$ is disjoint from $L_\Sigma$
 as we now have different options.

 \smallskip

 If, say, $S$ is in $\cQ$, then
 either $P_\Pi \perp P'_\Sigma$, $R_\Pi \perp P_\Sigma$, and $P'_\Pi \perp R_\Sigma$;
or $P_\Pi \perp R_\Sigma$, $P'_\Pi \perp P_\Sigma$, and $R_\Pi \perp P'_\Sigma$.
This corresponds to Case $A_1$ and Case $C_2$.

\smallskip

If none of $R$ or $S$ are in $\cQ$, then (up to switching $R$/$S$ and $P$/$P'$) one of the following occurs:
\begin{enumerate}
 \item $P_\Pi \perp P'_\Sigma$, $P'_\Pi \perp P_\Sigma$, $R_\Pi \perp S_\Sigma$, $S_\Pi \perp R_\Sigma$;
 \item $P_\Pi \perp R_\Sigma$, $P'_\Pi \perp S_\Sigma$, $R_\Pi \perp P'_\Sigma$, $S_\Pi \perp P_\Sigma$;
\end{enumerate}
The first case corresponds to Case $C_1'$ and Case $A_2$.
The second case corresponds to Case $C_1^*$ and Case $A_4^*$.
\end{proof}

Clearly, $B_1^* = C_1^* \cup C_1'$.
Thus, $A_2 \cup A_4^*$ gives a different model for the strongly regular graph $NO^+_{2n}(3)$.
Similarly, $A_1 \cup A_3 \cup A_4'$ describes $B_2^*$.

\paragraph*{Acknowledgements} We thank the referee for their excellent comments, in particular, for pointing out the second correspondence.


\begin{thebibliography}{99}

\bibitem{BvM}
A.~E. Brouwer \& H. Van Maldeghem,
{\it Strongly Regular Graphs},
Cambridge University Press,
Encyclopedia of Mathematics and Its Applications 182, 2022.


\bibitem{Klein}
F.~Klein,
{\it Über die Transformation der allgemeinen Gleichung des zweiten Grades zwischen Linien-Koordinaten auf eine kanonische Form,}
Dissertation (1868), Math. Ann. {\bf 23} (1884) 539--578.

\bibitem{StanleyTakeda}
D.~Stanley \& M.~Takeda,
{\it Graph colouring and Steenrod's problem for Stanley-Reisner rings},
 arXiv:2501.09991v1 [math.AT] (2025).

\bibitem{Taylor91}
D. E. Taylor,
{\it The Geometry of the Classical Groups},
Heldermann, Berlin, 1992.

\bibitem{VMV19} H.~Van Maldeghem \& M.~Victoor,
{\it Combinatorial and geometric constructions of spherical buildings},
Surveys in Combinatorics 2019,
Cambridge University Press (ed. A. Lo et al.), London Math. Soc. Lect. Notes Ser. 456 (2019) 237--265.

\end{thebibliography}
\end{document}